\documentclass[11pt,reqno]{amsart}
\usepackage{amsthm,amsmath,amssymb}
\usepackage{graphicx}
\usepackage{float}
\usepackage[colorlinks=true,
linkcolor=blue,
urlcolor=blue,
citecolor=blue]{hyperref}
\usepackage{mathtools}
\usepackage{relsize}
\usepackage{tikz}
\usepackage{enumerate}
\usepackage[toc,page]{appendix}
\usepackage{pdflscape}
\usepackage{multirow}
\usepackage{adjustbox,expl3,etoolbox} 
\usepackage[margin = 3.2cm]{geometry}
\usepackage{mathrsfs}
\usepackage{rotating}
\usepackage{times}
\usepackage{mathpple}

\newtheorem{thm}{Theorem}[section]

\theoremstyle{definition}

\newtheorem{cor}[thm]{Corollary}
\newtheorem{exa}[thm]{Example}

\begin{document}
	
\title[The Isolating Fusion Algorithm]{Applications of the Isolating Fusion Algorithm to Table Algebras and Association Schemes}
	
\author[Allen Herman]{Allen Herman\textsuperscript{1,*}}
\thanks{\textsuperscript{*}The work of the first author is supported by an NSERC Discovery Grant.} 
\thanks{\textsuperscript{1} Department of Mathematics and Statistics, University of Regina, Regina, Saskatchewan S4S 0A2, Canada}
\email{Allen.Herman@uregina.ca}
	
{\author[R. Maleki]{Roghayeh Maleki\textsuperscript{2,3}}
\thanks{\textsuperscript{2} University of Primorska, UP IAM, Muzejski trg 2, 6000 Koper, Slovenia.}
	\thanks{\textsuperscript{3} University of Primorska, UP FAMNIT, Glagolja\v ska 8, 6000 Koper, Slovenia.}
\email{roghayeh.maleki@famnit.upr.si}}

\begin{abstract} 
Let $\mathbf{B}$ be a basis for an $r$-dimensional algebra $A$ over a field or commutative ring with unity.  The semifusions of $\mathbf{B}$ are the partitions of $\mathbf{B}$ whose characteristic functions form the basis of a subalgebra of $A$, and fusions are semifusions that respect a given involution on $A$.  

 In this paper, we give an algorithm for computing a minimal semifusion (or fusion) of $\mathbf{B}$ that isolates a prescribed list of disjoint sums of basis elements of $\mathbf{B}$, when such a semifusion (or fusion) exists.  We apply this algorithm to three problems: (1) computing the fusion lattices for small association schemes of a given order; (2) producing explicit realizations of association schemes with transitive automorphism groups; and (3) producing examples of non-Schurian fusions of Schurian association schemes whose adjacency matrices have noncyclotomic eigenvalues.  The latter is of interest to the open question asking whether association schemes with transitive automorphsm groups can have noncyclotomic character values.   
\end{abstract}

\subjclass[2000]{Primary: 05E30; Secondary: 16Z05}
\keywords{Association schemes; Table algebras; Fusion}

\date{May 5, 2022 (updated June 15, 2023)}

\maketitle

\section{Introduction}
\vspace{0.5cm}
Let $R$ be a commutative ring with $1$, and let $I=\{0,1,\dots,r-1\}$.  An $R$-based algebra is a pair $(A,\mathbf{B})$ where $A=R\mathbf{B}$ is a free $R$-algebra with finite basis $\mathbf{B}=\{b_{0}=1_A, b_1,\ldots, b_{r-1}\}$.  In other words, for all of the products $b_{i}b_{j}=\sum_{k\in I}^{}\lambda_{ijk}b_{k} \in A$, where $b_i, b_j \in \mathbf{B}$, the structure constants $\lambda_{ijk}$ for $0 \le i,j,k \le r-1$ all lie in $R$.  Typical examples of based algebras are the group algebras of finite groups, adjacency algebras of finite association schemes or coherent configurations, table algebras, $C$-algebras, and reality-based algebras (see \cite{Higman89}, \cite{Blau09}, or \cite{Zieschang2}).  In any of these cases, $1_A$ is either an element of $\mathbf{B}$ or the sum of orthogonal idempotent elements of $\mathbf{B}$, and $A$ comes equipped with a $\mathbf{B}$-invariant $R$-linear involution $*$.  

Given an $R$-based algebra $(A,\mathbf{B})$ and any partition $\mathcal{P} = \{ I_1, I_2, \dots, I_s \}$ of the indexing set $I = \{0,1,\dots,r-1\}$ for the elements of $\mathbf{B}$, the {\it characteristic functions of the partition} $\mathcal{P}$ are the elements $b_{I_j} = b_{I_{j,1}} + \dots + b_{I_{j,k_j}}$, where $I_j = \{ I_{j,1}, \dots, I_{j,k_j} \}$ for $j=1,\dots,s$. 
We say that $\mathbf{D}$ is a {\it semifusion} of $\mathbf{B}$ if $\mathbf{D}$ is the set of characteristic functions of a partition $\mathcal{P}$ and the $R$-span of $\mathbf{D}$ is a free $R$-algebra of rank $|\mathbf{D}| = |\mathcal{P}|$.  When $(A,\mathbf{B})$ is a based algebra with involution whose distinguished basis $\mathbf{B}$ contains $1_A=b_0$, then we say that a semifusion $\mathbf{D}$ is a {\it fusion} of $\mathbf{B}$ when $\mathbf{D}$ is $*$-invariant and $b_0 \in \mathbf{D}$. The {\it trivial fusion} of such a based algebra $\mathbf{B}$ is $\{b_0, b_1+\dots+b_{r-1}\}$. 

The fusions of the basis $\mathbf{B}$ naturally form a lattice, with minimal element $b_{0}$ and maximal element $\mathbf{B}$.  The size of $\mathbf{B}$ is the main obstruction to the calculation of its fusion lattice, as testing the partitions of $\mathbf{B}$ one-by-one is not practical for bases of size much higher than $10$.  Another well-known approach to finding fusions, an application of the Bannai-Muzychuk character table criterion (\cite{bannai1991subschemes},\cite{muzychuk1987v}), can be applied in the case of commutative table algebras, but for searches it has similar issues with partitions.  So this article is motivated by the need for methods to generate fusions that can be applied to larger bases.  This is in part due to progress in the classification of small association schemes, which is now up to $34$ \cite{Hanaki-Miyamoto}.  Indeed, the first application of the isolating fusion algorithm we present here finishes the calculation of fusion lattices for association schemes of a given order for all orders in the classification (see \cite{HMprogram}).  The second application is an efficient way to obtain explicit realizations of an association schemes as fusions of two-orbit configurations.  This is especially effective for constructing schemes with transitive automorphism groups $G$ as fusions of the Schurian association scheme of $G$ relative to a vertex stabilizing subgroup.  Another motivation for this work has been investigations concerning the cyclotomic eigenvalue and character value questions, which ask whether or not the values of irreducible characters of association schemes have to lie in a cyclotomic number field (see \cite[pg. 123]{bannaialgebraic} and \cite{Herman-Survey}).  Examples of noncommutative association schemes with noncyclotomic eigenvalues had been noted earlier, but these examples were Schurian.   In the last section we apply the isolating fusion algorithm to produce some examples of non-Schurian schemes with transitive automorphism groups that have adjacency matrices with noncyclotomic eigenvalues.  

Our implementation of the isolating fusion algorithm uses GAP \cite{GAP4}, the code and fusion lattice data has been posted to Github \cite{HMprogram}.  The GAP packages {\tt coco2p} \cite{coco2p} and {\tt AssociationSchemes} \cite{AssociationSchemes} both offer advanced functions for calculating fusions.  In {\tt coco2p}, the fusions of coherent configurations are calculated using either partitions or graph coloring techniques.   The fusion functions in {\tt AssociationSchemes} versions 2.0$^+$ for intersection algebras have the same theoretical foundation as our isolating fusion algorithm, and were developed independently at about the same time.  {\tt AssociationSchemes} also offers functions that enable calculation of the fusion lattice of association schemes of a given order.  (We would like to thank Jesse Lansdown for updating us on the latest capabilities of the {\tt AssociationSchemes} package.) 

\section{The Isolating Fusion Algorithm}

In this section we give a condition on the structure constants of $\mathbf{B}$ that, for a given subset $I\subseteq \{0,1,\dots, r-1\}$, enables us to directly compute a minimal semifusion of $\mathbf{B}$ that contains the characteristic function $b_I$ of $I$.  We call such a semifusion a {\it $b_I$-isolating semifusion}.  If $I$ contains a single index i.e., $I = \{i\}$ for all $i\in I\setminus \{0\}$, then we call it a {\it $b_i$-isolating semifusion}.  When $|I|=k$, we also refer to $b_I$-isolating semifusions as $k$-isolating semifusions.

We will assume the basis $\mathbf{B}$ always has $b_0=1_A$, even though this is not essential to the method.  Every table algebra has the trivial fusion $\{b_{0},b_{1}+\dots+b_{r-1}\}$, which is the unique minimal $b_{0}$-isolating fusion.  

Our $b_I$-isolating semifusion criterion is based on a theorem which generalizes a well-known and elementary observation concerning fusions of association schemes (see \cite[(1.3)(ii)]{Bannai-Song1993}). 

\begin{thm}
	Let $\mathbf{B}=\{b_0=1_A,b_1,b_2,...,b_{r-1}\}$ be a basis of a free $R$-algebra containing $1_A$ with structure constants given by $\lambda_{ijk}$ for $i, j, k \in \{ 0, 1, \dots, r-1 \}$. Suppose $\mathbf{D}$ is a semifusion of $\mathbf{B}$, and that $b_I,b_J,b_K\in \mathbf{D}$, where $I,J$, and $K$ are subsets of $\{1,...,r-1\}$ with $I=\{i_1,...,i_{|I|}\}$, $J=\{j_1,...,j_{|J|}\}$, and $K=\{k_1,...,k_{|K|}\}$. Let the structure constants of $\mathbf{D}$ be given by $\{\lambda_{IJK}: b_{I},b_{J},b_{K}\in \mathbf{D}\}$, where $b_{I}b_{J}=\sum_{b_{K}\in \mathbf{D}}^{}\lambda_{IJK}b_{K}$. Then, for any $k\in K$,
	\begin{align*}
		\lambda_{IJK}=\sum_{i\in I} \sum_{j\in J} \lambda_{ijk}
	\end{align*}
	is constant.
	\label{3.1}
\end{thm}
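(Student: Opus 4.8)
The plan is to compute the product $b_Ib_J$ in two ways and compare coefficients against the basis $\mathbf{B}$. First I would expand directly from the definition of the characteristic functions: since $b_I=\sum_{i\in I}b_i$ and $b_J=\sum_{j\in J}b_j$, bilinearity of multiplication gives
\[
b_Ib_J=\sum_{i\in I}\sum_{j\in J}b_ib_j=\sum_{i\in I}\sum_{j\in J}\sum_{k=0}^{r-1}\lambda_{ijk}b_k=\sum_{k=0}^{r-1}\Bigl(\sum_{i\in I}\sum_{j\in J}\lambda_{ijk}\Bigr)b_k.
\]
So the coefficient of $b_k$, for each fixed $k$, is exactly $\sum_{i\in I}\sum_{j\in J}\lambda_{ijk}$.

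Next I would use the hypothesis that $\mathbf{D}$ is a semifusion of $\mathbf{B}$: its $R$-span is a subalgebra, so $b_Ib_J$ lies in that span and may be written $b_Ib_J=\sum_{b_L\in\mathbf{D}}\lambda_{IJL}b_L$ with the $\lambda_{IJL}\in R$. Re-expanding each $b_L=\sum_{l\in L}b_l$ in terms of $\mathbf{B}$ yields $b_Ib_J=\sum_{b_L\in\mathbf{D}}\lambda_{IJL}\sum_{l\in L}b_l$. Because $\mathbf{D}$ consists of the characteristic functions of a \emph{partition} $\mathcal{P}=\{I_1,\dots,I_s\}$ of $\{0,1,\dots,r-1\}$, each index $k$ belongs to exactly one part $L\in\mathcal{P}$, and in this second expansion the coefficient of $b_k$ is precisely $\lambda_{IJL}$, where $L$ is the part containing $k$.

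Now I would invoke that $A$ is a free $R$-algebra with basis $\mathbf{B}$, so $\mathbf{B}$ is $R$-linearly independent and the two expressions for $b_Ib_J$ must have equal coefficients on each $b_k$. Comparing them gives, for every $k$ lying in a part $L$ of $\mathcal{P}$,
\[
\lambda_{IJL}=\sum_{i\in I}\sum_{j\in J}\lambda_{ijk}.
\]
Applying this with $L=K$ (which is a part of $\mathcal{P}$ since $b_K\in\mathbf{D}$) shows that for every $k\in K$ the sum $\sum_{i\in I}\sum_{j\in J}\lambda_{ijk}$ equals the single ring element $\lambda_{IJK}$, hence is independent of the choice of $k\in K$, which is exactly the assertion.

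I do not anticipate a genuine obstacle here: the argument is just expansion followed by coefficient comparison. The only points requiring care are that the semifusion hypothesis is what legitimately closes the product $b_Ib_J$ inside $R\mathbf{D}$, that the \emph{partition} property is what guarantees each $k$ is accounted for in exactly one part $L$, and that $R$-linear independence of $\mathbf{B}$ (the freeness assumption) is needed to equate coefficients termwise over a general commutative ring $R$.
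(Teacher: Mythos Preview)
Your argument is correct and is essentially the same as the paper's: both expand $b_Ib_J$ in terms of $\mathbf{B}$, regroup according to the partition underlying $\mathbf{D}$, and read off that the coefficient of each $b_k$ with $k\in K$ must equal the single structure constant $\lambda_{IJK}$. Your write-up is in fact slightly more explicit than the paper's in invoking the $R$-linear independence of $\mathbf{B}$ to justify the coefficient comparison.
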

\begin{proof} We have
	\begin{align*}
		b_{I}b_{J}&={\huge \left (\sum_{i\in I} b_{i} \right) }\huge \left (\sum_{j\in J}^{}b_{j}\right) =\sum_{k=0}^{r-1}\sum_{i\in I}^{}\sum_{j\in J}^{}\lambda_{ijk}b_{k} =\sum_{b_{K}\in \mathbf{D}}^{}\sum_{k\in K}^{}\sum_{i\in I}^{}\sum_{j\in J}^{}\lambda_{ijk}b_{k}\\
		&=\sum_{b_{K}\in \mathbf{D}}^{}\huge \left (\sum_{i\in I}^{}\sum_{j\in J}^{}\lambda_{ijk'} \left (\sum_{k\in K}^{}b_{k}\right )\right) {\normalsize \,\,\, (\mbox{for any } k'\in K, \mbox{ since } b_{K}=\sum_{k\in K}^{}b_{k}\in \mathbf{D})}       \\
		&=\sum_{b_{K}\in \mathbf{D}}^{}\huge \left(\sum_{i\in I}^{}\sum_{j\in J}^{}\lambda_{ijk'}\right)b_{K}.
	\end{align*}
\end{proof}	

\begin{cor} 
	For any $K\subseteq \{1,2,...,r-1\}$, $\mathbf{D}$ is a $|K|$-isolating semifusion of $\mathbf{B}$ if and only if for all $b_{I}, b_{J}\in \mathbf{D}$;\\
	(i) $\lambda_{IJK}=\sum_{i\in I}^{}\sum_{j\in J}^{}\lambda_{ijk}$ is constant for every $k\in K$\\
	(ii) $\lambda_{IKJ}=\sum_{i\in I}^{}\sum_{k\in K}^{}\lambda_{ikj}$ is constant for every $j\in J$; and\\
	(iii) $\lambda_{KIJ}=\sum_{k\in K}^{}\sum_{i\in I}^{}\lambda_{kij}$ is constant for every $j\in J$.
\end{cor}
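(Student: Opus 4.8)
The plan is to get both directions out of Theorem~\ref{3.1} together with one structural fact: the partition underlying a $|K|$-isolating semifusion has $K$ as a single block and every other index of $\{0,1,\dots,r-1\}$ as a singleton block, so that a vector $\sum_{m}c_mb_m$ of $A$ lies in the $R$-span $R\mathbf{D}$ of $\mathbf{D}$ exactly when $m\mapsto c_m$ is constant on the block $K$, the singleton blocks imposing no condition.

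For the ``only if'' direction I would suppose $\mathbf{D}$ is a $|K|$-isolating semifusion, so in particular $\mathbf{D}$ is a semifusion containing $b_K$, and fix any $b_I,b_J\in\mathbf{D}$. Then $b_I,b_J,b_K$ all lie in $\mathbf{D}$, so Theorem~\ref{3.1} applies. Taking its triple to be $(b_I,b_J,b_K)$ gives that $\sum_{i\in I}\sum_{j\in J}\lambda_{ijk}$ is constant in $k\in K$, which is (i); taking the triple to be $(b_I,b_K,b_J)$ gives (ii); and taking it to be $(b_K,b_I,b_J)$ gives (iii). So this direction is immediate---(i), (ii), (iii) are just Theorem~\ref{3.1} with the roles of the three blocks permuted.

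For the ``if'' direction I would assume (i), (ii), (iii) hold for every $b_I,b_J\in\mathbf{D}$. By the structural fact above it is enough to check that for each pair of blocks $I,J$ the product $b_Ib_J$ has constant coefficient on the block $K$. Expanding over $\mathbf{B}$,
\[
b_Ib_J=\sum_{m=0}^{r-1}\Bigl(\sum_{i\in I}\sum_{j\in J}\lambda_{ijm}\Bigr)b_m,
\]
so the coefficient of $b_k$ for $k\in K$ is $\sum_{i\in I}\sum_{j\in J}\lambda_{ijk}$, and this is independent of $k\in K$: by (i) if $b_K\notin\{b_I,b_J\}$, by (ii) if $b_J=b_K$, and by (iii) if $b_I=b_K$. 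Hence $b_Ib_J\in R\mathbf{D}$ for every pair of blocks, so $R\mathbf{D}$ is closed under multiplication; it contains $1_A=b_0$ since $0\notin K$, and its spanning set of block characteristic functions is $R$-linearly independent of size $|\mathbf{D}|$, so $R\mathbf{D}$ is a free $R$-subalgebra of rank $|\mathbf{D}|$. As $b_K\in\mathbf{D}$, this exhibits $\mathbf{D}$ as a $|K|$-isolating semifusion.

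I expect no serious obstacle: the corollary is essentially a repackaging of Theorem~\ref{3.1}. The one point that will need care is the bookkeeping in the ``if'' direction---verifying that (i)--(iii), ranged over all pairs of blocks, really do exhaust every obstruction to closure of $R\mathbf{D}$. Since all blocks other than $K$ are singletons, this reduces precisely to controlling the $K$-coefficients of $b_Ib_J$, of $b_Ib_K$, and of $b_Kb_I$, which is what (i), (ii), and (iii) do respectively; in fact (ii) and (iii) are the instances of (i) with a factor equal to $b_K$, and are listed separately only because the isolating fusion algorithm runs them as distinct tests.
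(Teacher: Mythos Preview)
The paper states this corollary immediately after Theorem~\ref{3.1} with no proof, so there is nothing to compare against on the paper's side.

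Your ``only if'' direction is fine and is exactly what one expects: once $b_K\in\mathbf{D}$, Theorem~\ref{3.1} applied to the ordered triples $(I,J,K)$, $(I,K,J)$, $(K,I,J)$ yields (i), (ii), (iii).

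The gap is in your structural fact. By the paper's definition, a $b_K$-isolating (equivalently $|K|$-isolating) semifusion is simply \emph{any} semifusion containing $b_K$; the remaining parts of $\mathbf{D}$ are not required to be singletons. The algorithm described right after the corollary explicitly works with partitions whose non-$K$ blocks are nontrivial, so the singleton reading is not the intended one. With the correct reading your ``if'' argument collapses: to verify that $R\mathbf{D}$ is a subalgebra you must check, for every pair of blocks $I,J$ and every block $L$, that $\sum_{i\in I}\sum_{j\in J}\lambda_{ij\ell}$ is constant in $\ell\in L$. Conditions (i)--(iii) handle this when $L=K$ or when one of the factors is $b_K$, but say nothing about products $b_Ib_J$ restricted to a block $L$ when none of $I,J,L$ equals $K$.

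In fact, under the general reading the ``if'' direction is false as written: one can have a partition with $K$ as a block for which (i)--(iii) all hold while closure fails on two other blocks. So either the authors are silently restricting $\mathbf{D}$ (your singleton interpretation is one restriction under which both directions do hold, and your argument for that case is correct), or they intend the corollary as the list of necessary constraints that drives the refinement step of their algorithm rather than a literal characterisation. Either way, the premise you rely on for the converse is not part of the definition of a $|K|$-isolating semifusion, and you should flag that the corollary's ``if'' half needs an additional hypothesis on $\mathbf{D}$.
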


This corollary suggests an algorithm that can compute a {\it minimal $\mathcal{I}$-isolating semifusion}.  Given a collection $\mathcal{I} = \{I_1,\dots,I_q\}$ of disjoint subsets of the index set $\{0,\dots,r-1\}$ of the basis $\mathbf{B} = \{b_0,b_1,\dots,b_{r-1} \}$, it outputs a minimal semifusion (or fusion) $\mathbf{F}$ for which $b_{I_1}, \dots, b_{I_q}$ are basis elements of $\mathbf{F}$, if such a semifusion (or fusion) exists.  Here we outline the steps under the extra assumption $b_0 \in \mathbf{F}$, which would be the case for the fusion of a table algebra.\\

\begin{enumerate}[{\bf Step~1.}]
	\item  Let $[r-1]=\{1,\dots,r-1\}$.  Suppose $I_1,\dots,I_q$ are disjoint subsets of $[r-1]$, and let $\mathcal{I}=\{I_1,\dots,I_q\}$.  Start with the partition 
$$\Sigma_0 = \{ I_1, \dots, I_q, [r-1]-(I_1 \cup \dots \cup I_q) \}.$$  For $i,j=1,\dots,q$, compute the expression of $(b_{I_i}b_{I_j})$ in terms of $\mathbf{B}$, and determine the smallest collection $\Sigma_1$ of subsets of $[r-1]$ for which $(b_{I_i}b_{I_j}) \in span_{\mathbb{C}}( \{b_0 \} \cup \{b_J : J \in \Sigma_1 \})$.  If any of the $b_{I_j}$ for $j=1,\dots,q$ do not lie in $span_{\mathbb{C}}(\{b_0 \} \cup \{b_J : J \in \Sigma_1 \})$, the algorithm returns {\tt fail}.  In this case, there is no $\mathcal{I}$-isolating fusion. (In practice, we can continue and the algorithm will return the first refinement of $\mathcal{I}$ it finds that produces a fusion).  Otherwise, Step 1 outputs $\Sigma_1$. \\

\item Let $\Sigma_1 = \{J_1=I_1,\dots,J_q=I_q,J_{q+1},\dots,J_d\}$.  Find the maximal refinement $\Sigma_2$ of $\Sigma_1$ for which $b_{J_i} b_{J_j} \in span_{\mathbb{C}}(\{b_0\} \cup \{b_J : J \in \Sigma\})$ for all pairs $J_i,J_j \in \Sigma_1$ with $1 \le i \ne j \le d$.  If $b_{I_k} \notin span_{\mathbb{C}}(\{b_0\} \cup \{b_J : J \in \Sigma_{i,j} \})$ for some $k = 1,\dots,q$, the algorithm returns {\tt fail}.  If {\tt fail} is not returned, and $\Sigma_2$ is a proper refinement of $\Sigma_1$, replace $\Sigma_1$ by $\Sigma_2$ and repeat Step 2.  Eventually, either {\tt fail} will be returned at point or the algorithm stops with $\Sigma_2 = \Sigma_1$.  In the latter case the output is $\mathbf{F} = \{b_0\} \cup \{b_J : J \in \Sigma_2\}$, a minimal $\mathcal{I}$-isolating semifusion of $\mathbf{B}$. \\ 

\item  Apply this extra step only if $\mathbf{B}$ is invariant under an involution of $A$, and an $\mathcal{I}$-isolating fusion is desired.  First, start with a minimal partition $\Sigma'_0$ for which $b_{I_1},\dots,b_{I_q},b_{I_1}^*, \dots,b_{I_q}^* \in \{b_J : J \in \Sigma'_0 \}$.  If no such partition exists, return {\tt fail}.  Repeat Steps 1 and 2.  If {\tt fail} is not returned, and Step 2 outputs a semifusion that is not a fusion, then we refine the resulting $\Sigma_2$ to the minimal partition $\Sigma'_2$ for which $b_{J_i}, b_{J_i}^* \in \{b_J : J \in \Sigma'_2 \}$.  We then replace $\Sigma'_0$ by $\Sigma'_2$ and repeat Steps 1 and 2 again.  Repeat this procedure until either {\tt fail} is returned or the output of Step 2 is a fusion. 
\end{enumerate}

While the algorithm is reasonably simple to implement, its main advantage is that isolating fusions are calculated directly, so it can be applied with success on reasonably large bases, and works equally well in both commutative and noncommutative situations.  For instance, in the course of generating data for the last section here we were applying it to obtain singleton-isolating fusions of Schurian schemes of order between $500$ and $600$ within a few minutes.  To illustrate the use of the algorithm for association schemes and coherent configurations, we give applications in the next sections. 

\section{Calculating fusion lattices for association schemes of a given order}

We have used the isolating fusion algorithm to calculate complete fusion lattices for association schemes of a given small order, up to and including $30$.  This had been attempted earlier at the time of the classification, but the first author noticed the fusion lattices for orders $24$, $28$, and $30$ were not complete; some fusions had not been detected.  The new fusion lattices are recorded at \cite{HMprogram}.   

The main advantage of this method for a scheme of rank $r$ is that the algorithm can produce several fusions quickly from its output starting from singleton subsets of $\{1,\dots,r-1\}$ of size $1$, $2$, $3$, etc., (to the extent the machine allows), rather than searching partitions.   Fusions of larger rank containing these single-set isolating fusions are found using the multiple-set isolating option for the algorithm.  (In practice, finding all $1$-isolating fusions can be done efficiently even when the rank $r$ is in the hundreds, for seeds of larger size there tends to be too many in these cases for a complete search to finish, but we can sample randomly generated seeds to reliably search for fusions.) 

As the association schemes of a given order are classified up to combinatorial automorphism and not algebraic automorphism, each minimal isolating fusion that our algorithm produces must be combinatorially identified.  This is achieved with some GAP functions developed earlier by Herman and Sikdar \cite{HermanWebsite}, which find a permutation matrix that matches the set of standard matrices of the association scheme to the set of standard matrices in the classification of schemes of the given order.  To speed up the identification of the fusion scheme we first match the sets of minimal polynomials produced by our basis elements.  This determines the scheme up to the character table, and in cases where more than one scheme has this character table we must then look for matching sets of adjacency matrices. 

\begin{exa}
In this example we consider the fusions of the association scheme labeled {\tt as28no176 } in the classification \cite{Hanaki-Miyamoto}.  This quasi-thin non-Schurian association scheme of order $28$ and rank $16$ was studied extensively in \cite{klin2007association}.  Here we reconsider the fusions of this association scheme using our implementation of the isolating fusion algorithm. the computer implementation of our method.  
	
We begin by running the isolating fusion algorithm on singleton subsets of $[15]$ of size $1$ to $7$, recording the partitions which produce new association schemes in the classification.  For {\tt as28no176}, this process takes our desktop computer about 1 hour.  The same association scheme in the classification can occur for different partitions; this is due to the action of the algebraic automorphism group on $\mathbf{B}$, which is not to be confused with the combinatorial automorphism group of the association scheme. Rather than computing this beforehand, we record all the partitions as we go along and use the information to determine the action of the algebraic automorphism group on $\mathbf{B}$, and record the number of different partitions giving the same scheme as an exponent.  

		$$
			\hspace*{-1.6cm}
			\begin{array}{l}
			|\mathcal{I}|=1: \\
			\\
			{\tt as28no2}: \{\{1\},\{2,3,4,5,6,7,8,9,10,11,12,13,14,15\}\}^3, \\
			{\tt as28no3}: \{ \{ 1, 2, 3 \}, \{ 4, 5, 6, 7, 8, 9, 10, 11, 12, 13, 14, 15 \} \}^1, \\
			{\tt as28no7}:  \{  \{ 1, 2, 3, 4, 5, 8, 9, 14, 15 \}, \{ 6, 7, 10, 11, 12, 13 \} \}^4, \\
			{\tt as28no10}: \{\{1\},\{2,3\},\{4,5,6,7,8,9,10,11,12,13,14,15\}\}^3, \\
			{\tt as28no59}:  \{ \{ 1, 2, 3 \}, \{ 4, 5, 8, 9, 12, 13 \}, \{ 6, 7, 10, 11, 14, 15 \} \}^4, \\
			{\tt as28no72}:  \{ \{ 1, 2, 3 \}, \{ 4, 7, 9, 10, 12, 14 \}, \{ 5, 6, 8, 11, 13, 15 \} \}^1, \\
			{\tt as28no75}:  \{ \{ 1, 4, 5, 12, 13 \}, \{ 2, 8, 9, 14, 15 \}, \{ 3, 6, 7, 10, 11 \} \}^4, \\
			{\tt as28no87}: \{ \{ 1, 2, 3 \}, \{ 4, 5, 6, 7\}, \{8, 9, 10, 11\}, \{12, 13, 14, 15 \} \}^1, \\
			{\tt as28no110}:  \{ \{ 1, 2, 3 \}, \{ 4, 9, 12 \}, \{ 5, 8, 13 \}, \{ 6, 11, 15 \}, \{ 7, 10, 14 \} \}^4, \\
			{\tt as28no113}: \{\{1\},\{2,3\},\{4,5,6,7\},\{8,9,10,11\},\{12,13\},\{14,15\}\}^3, \\
			{\tt as28no114}: \{\{1\},\{2,3\},\{4,5,6,7\},\{8,9,10,11\},\{12,15\},\{13,14\}\}^3,\\ 
        	{\tt as28no138}: \{ \{1,2,3\}, \{4,7\},\{5,6\},\{8,11\},\{9,10\},\{12,14\},\{13,15\} \}^1; 
        		\end{array}$$
         		$$\begin{array}{l}
			|\mathcal{I}=2|: \\
			\\ 	 
			{\tt as28no76}: \{ \{1\},\{2\},\{3\},\{4,5,6,7,8,9,10,11,12,13,14,15\} \}^1, \\
			{\tt as28no79}:  \{ \{ 1 \}, \{ 2, 3 \}, \{ 4, 7, 9, 10, 12, 14 \}, \{ 5, 6, 8, 11, 13, 15 \} \}^3, \\
			{\tt as28no95}: \{ \{ 1 \}, \{ 2, 3 \}, \{ 4, 5, 6, 7 \}, \{ 8, 9, 10, 11 \}, \{ 12, 13, 14, 15 \} \}^3, \\	
			{\tt as28no145}: \{ \{1 \}, \{ 2, 3 \}, \{ 4, 7 \}, \{ 5, 6 \}, \{ 8, 11 \}, \{ 9, 10 \}, \{ 12, 14 \}, \{ 13, 15 \} \}^3,\\
			{\tt as28no155}: \{ \{1\},\{2\},\{3\},\{4, 7\},\{ 5, 6 \}, \{ 8, 11 \}, \{ 9, 10 \}, \{ 12 \}, \{ 13 \}, \{ 14 \}, \{ 15 \} \}^3; 
          	\\
          	\\
			|\mathcal{I}=3|: \\
	      	\\
			{\tt as28no91}: \{ \{1\},\{2\},\{3\},\{4,7,9,10,12,14\},\{5,6,8,11,13,15\} \}^3, \\
			{\tt as28no111}:  \{ \{1\},\{2\},\{3\},\{4,5,6,7\},\{8,9,10,11\},\{12,13,14,15\} \}^3, \mbox{ and }\\
			{\tt as28no149}: \{ \{1\},\{2\},\{3\},\{4,7\},\{5,6\},\{8,11\},\{9,10\},\{12,14\},\{13,15\} \}^3. \\        	
		\end{array}$$

The uniqueness of the partition for occurrences of some of these fusion schemes, such as {\tt as28no72}, allow us to determine the orbits of the algebraic automorphism group of $\mathbf{B}$ in its action on $\mathbf{B}$. 	With a bit more work one can determine this algebraic automorphism group: it acts on the indices of the non-identity elements $\{b_1,\dots,b_{15}\}$ as the permutation group $\langle (1,3,2)(4,9,12)(5,8,13)(6,11,15)(7,10,14),(4,7)(5,6)(9,10)(8,11) \rangle$. 
This group has order $12$ and is isomorphic to the alternating group $A_4$.  
	
We use the multiple-subset version of the algorithm to ensure all fusions have been found.  This can be achieved because it suffices to only consider combinations of disjoint subsets up to algebraic isomorphism that occur in the list of single-set isolating fusions.  In practice, to check that all maximal fusions have been found, one needs to check for intermediate fusions between the singleton partition and the partitions recorded for the existing maximal fusions.  In this case the scheme {\tt as28no155} was found first, above {\tt as28no145}, and an examination of partitions lying between these two produced the intermediate fusion {\tt as28no149}.  Note that we can see from our partition data that {\tt as28no113} and {\tt as28no114} are also maximal fusions of {\tt as28no155}, this was not recorded earlier in \cite[pg.~2008]{klin2007association}. \\
\end{exa}

\section{Realizing fusions of two-orbit coherent configurations}

Initially, the authors developed the isolating fusion algorithm with the idea to search for association schemes with noncyclotomic eigenvalues, which will be discussed in the next section.  It was after seeing the algorithm in action that they realized it was also a useful tool for obtaining concrete realizations of association schemes or coherent configurations.  In fact, every coherent configuration of order $n$ is a fusion of the two-orbit configuration that arises from its combinatorial automorphism group.  Since the isolating fusion algorithm makes it possible to directly compute fusions of association schemes or coherent configurations of reasonably large rank with some efficiency, and the two-orbit configuration of a permutation group $G$ acting on $\{1,\dots,n\}$ is relatively easy to construct, it is natural to use the algorithm to construct many coherent configurations.  We have found that the isolating fusion program  can do reasonably efficient calculations (i.e. in less than a few minutes) of fusions when the rank of the initial  configuration is less than $200$, the most expensive step for larger rank being the generation of the left regular matrices of the basis, as these are $r \times r$ for a configuration of rank $r$.  

\begin{exa} {\rm  In the case $G=1 \le S_n$, the two-orbit configuration of $G$ is the full matrix configuration of rank $n^2$, whose adjacency matrices are the full set of matrix units $\{E_{i,j} : 1 \le i,j \le n\}$.  This is the most difficult case of order $n$ configurations for our algorithm to handle.  (If $n$ is greater than $15$ a subroutine to generate the left regular matrices corresponding to the full matrix units at the start would be needed.)  The full matrix configuration has two types of singleton-isolating fusions, $\{E_{i,i}\}$-isolating fusions for $1 \le i \le n$, and $\{E_{i,j}\}$-isolating fusions for $1 \le i,j \le n$.   The $\{E_{i,i}\}$-isolating fusions are 
$$ \{ \{E_{i,i} \}, \cup_{v \ne i} \{E_{i,v}\}, \cup_{u \ne i} \{E_{u,i}\}, I - \{ E_{i,i} \}, J - \cup_{u,v \ne i \\ u \ne v} \{E_{u,v}\} \}, $$ 
and the $\{E_{i,j}\}$-isolating fusions for $i \ne j$ are 
$$ \{ \{E_{i,i}\}, \{E_{i,j}\}, \{E_{j,i}\}, \{E_{j,j}\}, \cup_{u \ne i,j} \{ E_{u,j} \}, I - \{ E_{i,i}, E_{j,j} \}, \cup_{v \ne i,j } \{ E_{i,v} \}, J - \cup_{u,v \ne i,j \\ u \ne v} \{E_{u,v}\} \}. $$
} \end{exa}

As our initial implementation is originally designed to work with bases of left regular matrices whose first element is $I$, it needs some small adjustments to work correctly when the input is a configuration that is not an association scheme.  After these adjustments, we are able to obtain correct fusions of coherent configurations by using its multiple set option to the union of the set of all fibers with the set of other elements we wish to isolate in the input.  So to obtain the above we apply the multiple set isolating function to $\{ \cup_{i=1}^n \{E_{i,i}\}, \{E_{i,j} \} \}$.

\begin{exa}  The non-Schurian association scheme {\tt as28no176} was constructed in \cite{klin2007association} as a homogeneous fusion of the two-orbit configuration of its automorphism group, which is an elementary abelian subgroup of $S_n$ of order $8$ that has $7$ orbits of size $4$ on $\{1,\dots,28\}$.  One can obtain this scheme with our algorithm from the basic matrix of the two-orbit configuration of such a permutation group as an $\mathcal{I}$-isolating fusion.  To do this we can take $\mathcal{I}$ to be a collection of three carefully-chosen sets, each containing $7$ basis elements of the configuration.  The first two of these sets fuse to a symmetric permutation matrix of valency $1$, and the third set must fuse to an asymmetric adjacency matrix of valency $2$.  In agreement with the sensitive description of the fusing patterns in \cite{klin2007association}, we have found there is more than one pattern of these choices that will produce a quasithin association scheme of the same rank, one results in {\tt as28no176} and the other in its Schurian partner {\tt as28no175}.   
\end{exa}

If an association scheme has a transitive automorphism group $G$, then its two-orbit configuration is precisely the Schurian association scheme whose relations are the $2$-orbitals of the action of $G$ on the left cosets of the stabilizer $H$ of a vertex.  In this case the adjacency algebra is precisely the double coset algebra $\mathbb{C}[G/\!\!/H]$, so once regular matrices for this Schurian scheme are obtained, the isolating fusion algorithm can be directly applied. 

The most common way to search for and construct primitive association schemes is to consider the case where $H$ is a maximal subgroup of a group $G$ appearing in the Atlas of Finite Simple Groups.    Indeed, this is the way many primitive strongly regular graphs are described in \cite{Brouwer-VanMaldeghem2022}, and it is quite straightforward to duplicate many of their constructions with the isolating fusion algorithm.   We illustrate this capability with a typical example. 

\begin{exa} The $Aut(Sz(8))$ graph is a strongly regular graph on $560$ vertices whose primitive association scheme is the fusion of a Schurian scheme on the automorphism group of the simple group $Sz(8)$ in the Suzuki family (see \cite[\S 10.55]{Brouwer-VanMaldeghem2022}).   We can obtain this scheme in GAP by taking $G_0$ to be the group $Sz(8)$ from the Atlas, then letting $G_1$ be its holomorph $G_0 \rtimes Aut(G_0)$, and letting $G$ be the subgroup of $G_1$ whose generators correspond to those of $Aut(G_0)$.  With this approach $Aut(Sz(8))$ is realized as a permutation group in GAP, which speeds up the calculation of its maximal subgroups.  Up to conjugacy we find five maximal subgroups, here we list their indices and numbers of double cosets (which correspond to the order and rank of the ensuing Schurian scheme):  
\begin{table}[H]
	\begin{tabular}{|c|c|c|}
		\hline
		Maximal Subgroups & $|G/H|$ & $|G/\!\!/H|$ \\ \hline 
		$H_1$ & 3 & 3  \\ \hline
		$H_2$ & 65 & 2  \\ \hline
		$H_3$ & 560 & 7  \\ \hline
		$H_4$ & 1456 & 27  \\ \hline
		$H_5$ & 2080 & 59  \\ \hline	
	\end{tabular}
\end{table}
$H_1$ is a normal subgroup of index $3$, and $H_2$ corresponds to a well-known doubly transitive action of $Aut(Sz(8))$ on $65$ points.  When we apply the isolating fusion algorithm to find fusions of $G/\!\!/H_3$, we find it has a fusion of rank $3$ that corresponds precisely to the $Aut(Sz(8))$ graph scheme.  We are able to use isolating fusions to show $G/\!\!/H_4$ has no singleton isolating fusions, and a random search for isolating fusions of this scheme found no proper fusions.   For $G/\!\!/H_5$, we generate the regular matrices using new formulas (see \cite{Tout2017}, \cite{Alsairafi-Herman2021}) that calculate structure constants of Schurian schemes directly.  Again it has no singleton isolating fusions, and a random search for other isolating fusions did not find a proper fusion.  So this gives strong evidence the last two of these have no proper fusions.    
\end{exa}

\section{A search for small association schemes with noncyclotomic character values}  

Our main motivation for the development of the isolating fusion technique is the {\it cyclotomic eigenvalue problem}, which asks if the entries of character tables of commutative association schemes always lie in cyclotomic extensions of $\mathbb{Q}$ \cite{bannaialgebraic}.  In other words, this asks if adjacency matrix of every relation in a finite commutative association scheme should have cyclotomic eigenvalues.  In \cite{Herman-Survey}, the first author asked if there are adjacency matrices of relations in finite noncommutative association schemes that have noncyclotomic eigenvalues, and if so, if there were noncommutative association schemes with noncyclotomic fields of character values.  The latter is known to hold for Schurian association schemes by Morita theory (see \cite{herman2011schur}), fusions of commutative association schemes with cyclotomic eigenvalues by the Bannai-Muzychuk criterion (see \cite{Bannai-Song1993}), and for {\it orbit} Schur rings where the partition of the group $G$ consists of the orbits of some subgroup of $Aut(G)$ by \cite[Proposition 2.12]{DelvauxNauwelaerts1998}.  
 
Soon after \cite{Herman-Survey} appeared, it was pointed out that some examples of noncommutative association schemes with noncyclotomic eigenvalues were already appearing in the classification of small association schemes.  In particular {\tt as26no27}, with rank 10, and {\tt as26no31}, with rank 14, have this property.  Both of these examples are Schurian association schemes, so do satisfy the cyclotomic character value property.  If $(X,S)$ is the association scheme, $G$ is the (combinatorial) automorphism group of $S$ and $H$ is the stabilizer in $G$ of a fixed $x \in X$, called vertex stabilizer, then $(X,S)$ is a fusion of the Schurian (i.e. $2$-orbit)  association scheme $(G/H, G/\!\!/H)$.  When $H=1$, the adjacency algebra of the corresponding $2$-orbit association scheme is isomorphic to the group algebra of $G$, and its fusions are the Schur rings of the group $G$.  We note that {\tt as26no27} is a Schurian association scheme with automorphism group $G$ isomorphic to {\tt SmallGroup(78,1)} in GAP and vertex stabilizer $H$ of order $3$, and {\tt as26no31} is a Schurian association scheme with automorphism group $G$ isomorphic to {\tt SmallGroup(52,3)} in GAP and vertex stabilizer $H$ of order 2.  {\tt as26no27} has the additional property that some of the adjacency matrices with noncyclotomic eigenvalues are diagonalizable, a condition that would be necessary to hold in counterexamples to the cyclotomic eigenvalue problem.   

With this in mind the authors set out to use the isolating fusion algorithm to look for association schemes with transitive automorphism groups for which some adjacency matrices have noncyclotomic eigenvalues.  To do so, we ran random searches of Schurian association schemes $(G/H,G/\!\!/H)$ for which $|G|\le 200$, and $H = 1$ or a non-normal subgroup of index $<7$,  skipping groups of order $128$.  Any with noncyclotomic eigenvalues were checked for being Schurian, which is done by comparing to the Schurian scheme defined on their combinatorial automorphism group with respect to the first vertex stabilizer.  Calculation of the combinatorial automorphism group makes use of Soicher's GAP package {\tt grape} \cite{grape}, which makes use of Brendan McKay's {\tt nauty} program \cite{nauty}.  Among these, Schurian schemes with noncyclotomic eigenvalues occur with reasonable frequency (we found more than 250 groups for which this happens), and diagonalizable basis elements with noncyclotomic eigenvalues are much less frequent (this occurred for only about 20 groups).   What we found most surprising, however, is that we found {\it only one} group $G$, identified in GAP as {\tt SmallGroup(96,72)}, for which our algorithm detects non-Schurian fusions with noncyclotomic eigenvalues for two of its Schurian schemes $(G/H,G/\!\!/H)$.  Such groups are the only ones of interest in studying the cyclotomic character values question for association schemes with transitive automorphism groups. 

\begin{exa} {\rm Let $G$ be the group is identified by {\tt SmallGroup(96,72)}, which has the presentation
$$ G \simeq (C_4 \times C_4) \rtimes C_6 \simeq \langle x,y,z : x^4=y^4=z^6=1, x^z=x^2y, y^z=xy \rangle.$$ 
First, let $H = \langle z^3 \rangle$, a non-normal subgroup of order $2$.  The Schurian scheme $(G/H,G/\!\!/H)$ has order $48$ and rank $30$.  It has two non-Schurian fusions for which some of the basis elements have noncyclotomic elements.  The first has rank $12$, it occurs as the isolating fusion of a sum of three elements, of valencies $1$, $2$, and $2$.  As elements of $\mathbb{C}G$, the sum of these three elements is $\frac{1}{2}[(Hz^2x^2H)^+ + (Hz^2xH)^+ + (Hz^2x^3yH)^+]$.  The minimal polynomial of this element is divisible by $x^3-3$, so it has noncyclotomic eigenvalues.    

The second has rank $15$, it occurs as the isolating fusion of $\frac{1}{2}[(Hz^2H)^+ + (Hz^2xyH)^+]$, a sum of elements of valency $1$ and $2$.  The minimal polynomial of this element is divisible by $x^3+3$, which has a non-Abelian Galois group. 

However, the values of all irreducible characters of both of these fusions lie in $\mathbb{Q}(\zeta_3)$, the field obtained by adjoining a cube root of unity to $\mathbb{Q}$.  (We can check this directly with GAP.  For algebraic number fields $F$, if we find that $dim(Z(FS))$ is equal to the number of centrally primitive idempotents of $FS$, then the values of irreducible characters lie in $F$.)

Second, let $H = 1$. In this case the adjacency algebra of the Schurian scheme $(G, G)$ is the group algebra, so the order and rank are both equal to $96$.  For it random searches with our algorithm produces two non-Schurian fusions (i.e. Schur rings on $G$) that have basis elements with noncyclotomic eigenvalues.  The largest has rank $30$, it occurs as the isolating fusion of $z^5+z^5x^2+z^5x^3$, whose minimal polyomial is divisible by $x^3-3$ and $x^3+3$.  It has a non-Schurian fusion of rank $16$ that has an element whose minimal polynomial is divisible by $x^3-3$.  Again the values of the irreducible characters of both of these non-Schurian fusions lie in $\mathbb{Q}(\zeta_3)$. 
}\end{exa}

\end{document}